\theoremstyle{plain}
\newtheorem*{thm*}{Theorem}
\newtheorem*{con*}{Conjecture}
\newtheorem{thm}{Theorem}[section]
\newtheorem{lem}[thm]{Lemma}
\newtheorem{con}[thm]{Conjecture}
\theoremstyle{definition}
\newtheorem{defn}[thm]{Definition}
\theoremstyle{remark}
\newtheorem{rmk}[thm]{Remark}
\title{ A note on Combinatorial Invariance of Kazhdan--Lusztig polynomials}
\author[Esposito, Marietti]{Francesco Esposito, Mario Marietti}
\date{}
\address{Grant T. Barkley, Department of Mathematics, Harvard University, Cambridge, MA, USA}
\email{gbarkley@math.harvard.edu}
\address{Francesco Esposito, Dipartimento di Matematica, Universit\`a degli Studi di Padova,
via Trieste 63, 35121 Padova, Italy}
\email{esposito@math.unipd.it}
\address{Christian Gaetz, Department of Mathematics, University of California, Berkeley, CA, USA}
\email{gaetz@berkeley.edu}
\address{Mario Marietti, Dipartimento  di Ingegneria Industriale e Scienze Matematiche, Universit\`a Politecnica delle Marche, via Brecce Bianche, 60131 Ancona,  Italy}
\email{m.marietti@univpm.it}
\subjclass[2020]
{ 05E10 - 05E16 (primary), 20F55  (secondary)}
\keywords{Kazhdan-Lusztig polynomials, Combinatorial invariance, hypercube decompositions}
\begin{document}

\maketitle

\begin{abstract}
We introduce the concepts of an amazing hypercube decomposition and a double shortcut for it, and use these new ideas to formulate a conjecture implying the Combinatorial Invariance Conjecture of the Kazhdan--Lusztig polynomials for the symmetric group.  This conjecture has the advantage of being combinatorial in nature. The appendix by Grant T. Barkley and Christian Gaetz discusses the related notion of double hypercubes and proves an analogous conjecture for these in the case of co-elementary intervals. 
\end{abstract}

\section{Introduction}
The Combinatorial Invariance Conjecture for Kazhdan--Lusztig polynomials was formulated independently by Lusztig and Dyer in the '80's. Its version for the symmetric group $W$ is the following.
\begin{con}
\label{CIC}
Let $u,v \in W$. The Kazhdan--Lusztig polynomial $P_{u,v}(q)$ (or, equivalently, the Kazhdan--Lusztig $\widetilde{R}$-polynomial $\widetilde{R}_{u,v}(q)$) depends only on the isomorphism class of the Bruhat interval $[u,v]$ as a poset. 
\end{con}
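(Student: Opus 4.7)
\medskip

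\noindent\textbf{Proof plan.}
My plan is to proceed by induction on the rank $\ell(v)-\ell(u)$ of the interval $[u,v]$, aiming to extract, purely from the poset structure, enough data to run the Kazhdan--Lusztig $\widetilde{R}$-polynomial recursion. The base cases are immediate: when the rank is $0$ or $1$ we have $\widetilde{R}_{u,v}=0$ or $\widetilde{R}_{u,v}=q-1$, both determined by the rank alone, and the rank is a poset invariant since every maximal chain in a Bruhat interval has length $\ell(v)-\ell(u)$.

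For the inductive step, the natural strategy is to mimic the standard recursion
\[
\widetilde{R}_{u,v}(q)=\widetilde{R}_{us,vs}(q)+q\,\widetilde{R}_{u,vs}(q)
\qquad (s\in D_R(v)\setminus D_R(u))
\]
from poset data. The idea is to use the amazing hypercube decomposition introduced in the paper: given $[u,v]$ as an abstract poset, one identifies a distinguished subposet isomorphic to a Boolean lattice that plays the role of ``multiplication by $s$'' in a way that does not require knowing $s$ itself. Concretely, I would (i) characterize, among the atoms and coatoms of $[u,v]$, those that arise from a common right descent via the lozenge/diamond pattern these hypercubes produce; (ii) use the associated hypercube to isolate the subintervals that correspond to $[us,vs]$ and $[u,vs]$, which by induction have $\widetilde{R}$-polynomials determined by their poset isomorphism class; and (iii) invoke the conjectured double shortcut property to rewrite the recursion entirely in isomorphism-invariant terms.

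The heart of the argument is then to show that the output does not depend on the choice of hypercube decomposition. This is where I expect the main obstacle to lie: the symmetric group typically admits many right descents of $v$, and hence many candidate hypercubes, so one must prove that all of them give rise to the same polynomial. The standard way to handle such ambiguity is a ``braid-like'' exchange argument: given two hypercube decompositions $H$ and $H'$ arising from descents $s$ and $t$, produce a common refinement (or a length-preserving homotopy of decompositions) and verify that the recursion values match along it. The hardest case is expected to be when $s$ and $t$ do not commute, so that no diagonal reduction is available; here one likely needs a dihedral-type identity between the four subintervals $[ust,vst]$, $[us,vs]$, $[ut,vt]$, and $[u,vst]$, verified through a finer study of the double shortcut structure.

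A secondary, but still serious, obstacle is the very first step: recognizing the hypercube \emph{as a poset-theoretic feature} of $[u,v]$. In the actual Bruhat setting, the hypercube is defined using multiplication by $s$, an operation external to the poset, so one must give an intrinsic characterization (for example, in terms of diamonds, lozenges, and their interactions with lower intervals), and then prove that such a characterization is non-empty for every non-trivial interval. Once these two obstacles are resolved, combining the induction, the recursion, and the well-definedness yields that $\widetilde{R}_{u,v}(q)$, and hence $P_{u,v}(q)$, depends only on $[u,v]$ up to poset isomorphism.
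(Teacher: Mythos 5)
The statement you are asked to ``prove'' is Conjecture~\ref{CIC}, the Combinatorial Invariance Conjecture itself, which is a famous open problem; the paper does \emph{not} prove it. What the paper does is introduce a conjectural framework (amazing hypercube decompositions, double shortcuts, Conjectures~\ref{congettura}, \ref{congetturaEM0}, \ref{congetturaEM}) and prove conditional implications (Theorems~\ref{bologna} and~\ref{thm:conj-implies-conj}): if the new, purely combinatorial conjecture holds, then Conjecture~\ref{CIC} follows. So a genuine proof of the statement cannot exist in this paper, and your proposal should not be presented as a proof.

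Your plan also misrepresents the mechanism by which hypercube decompositions would imply combinatorial invariance. You aim to reconstruct the classical descent recursion $\widetilde R_{u,v}=\widetilde R_{us,vs}+q\,\widetilde R_{u,vs}$ by finding, inside the abstract poset, a hypercube that ``plays the role of multiplication by $s$.'' That is not what a hypercube decomposition is. A hypercube decomposition is an element $z\in[u,v]$, and the associated identity (the content of Conjecture~\ref{congettura}) is the \emph{shortcut} formula $\widetilde R_{u,v}=\sum_{p\in W^z_{[u,v]}}q^{\operatorname{d}(u,p)}\widetilde R_{p,v}$, a sum over many terms indexed by shortcuts, not the two-term descent recursion. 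The poset-intrinsic part is that $[z,v]$, the shortcuts, and the distances $\operatorname{d}(u,p)$ can be read off the Bruhat graph, which (by a theorem of Dyer) is reconstructible from the poset; this lets one run the induction you describe, but on the shortcut formula, not on $s$-multiplication. So step (ii) of your plan, isolating subintervals corresponding to $[us,vs]$ and $[u,vs]$, is not what the hypercube machinery delivers.

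Finally, the ``heart of the argument'' you identify, showing that the output is independent of the choice of decomposition, is precisely what is \emph{not} proved anywhere: it is Conjecture~\ref{congetturaEM0}/\ref{congetturaEM} in the paper, which feeds Theorem~\ref{bologna} and then Theorem~\ref{thm:conj-implies-conj}. Your proposal names this as an obstacle and gestures at a ``braid-like exchange argument,'' but does not supply one, and no such argument is known in general (the appendix proves a related statement only for co-elementary intervals, and even there the object matched is double hypercubes, not double shortcuts, with a conjectural bijection between the two). In short: your outline correctly locates where the difficulty lives, but it is not a proof; it is a restatement of the open conjectural structure with the key steps left as gaps, and it conflates the hypercube-decomposition recursion with the descent recursion.
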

It has been the focus of active research for the past forty years (we refer the reader to \cite{BreOPAC} for more details) and has recently received important new inputs by  \cite{BBDVW} and \cite{DVBBZTTBBJLWHK}, where the concept of a hypercube decomposition was introduced and used to give a conjectural formula for the Kazhdan--Lusztig polynomials of the symmetric group that implies Conjecture~\ref{CIC}. 

Successively, a conjectural formula for $\widetilde{R}$-polynomial $\widetilde{R}_{u,v}(q)$ of the symmetric group was given in terms of shortcuts (\cite{BM}) of a hypercube decomposition. Also this formula would imply  Conjecture~\ref{CIC}.  A further work studying hypercube decompositions is \cite{BG}, where the authors give another conjecture implying Conjecture~\ref{CIC} and prove combinatorial invariance for elementary intervals, which marks an improvement of the result in \cite{BCM1} for lower Bruhat intervals. 

A new different approach (via flipclasses) is proposed in \cite{EM}. 

In this paper, we introduce the concepts of an amazing hypercube decomposition and a double shortcut for it, and use these new ideas to formulate yet another conjecture implying the conjecture in \cite{BM} and thus ultimately Conjecture~\ref{CIC}.  This new conjecture has the advantage of being combinatorial in nature, with no mention of Kazhdan--Lusztig polynomials: a purely graph theoretic property of the Bruhat interval would imply the Combinatorial Invariance Conjecture for the symmetric group. 

The conjecture presented in this paper has been verified (in a strong form) up to $S_6$ by computer calculations.

\section{Preliminaries}
In this section we recall the basic definitions and set the notation to be used. We  refer the reader to    \cite{BB} for undefined  terminology concerning Coxeter groups.

We recall the definition of a hypercube decomposition from  \cite{BBDVW}. Let $(W,S)$ be a Coxeter group of type $A_n$, i.e., the symmetric group $S_{n+1}$. We denote by $B(W)$ the Bruhat graph of $W$.

Given a set $E$, let ${\mathcal P}(E)$ denote the directed Boolean algebra on $E$, i.e. the directed graph having the power set of $E$ as vertex set and where $I\rightarrow J$ if $I$ is obtained from $J$ by removing one element.  

Let $p\in W$ and $E$ be a set of arrows of $B(W)$ all having  $p$ as target. Then  $E$ \emph{spans a hypercube} if there exists a unique embedding of directed graphs $\theta: {\mathcal P}(E) \rightarrow W$ sending the directed edge $E\setminus {\{\alpha\}} \rightarrow E$ to $\alpha$, for all $\alpha \in E$. 
Furthermore, $E$ \emph{spans a hypercube cluster} if every subset $E'$ of $E$ consisting of edges with pairwise incomparable sources (with respect to Bruhat order) spans a hypercube.

Let $u,v \in W$ and $z\in[u,v]$. We say that $z$ is an \emph{upper hypercube decomposition} of $[u,v]$ provided that:
\begin{enumerate}
\item 
$[z,v]$ is \emph{diamond complete} (with respect to $[u,v]$), meaning that, if there exist $x\in [u,v]$ and $a_1,a_2,y \in [z,v]$, $a_1 \neq a_2$, such that $x\rightarrow a_1\rightarrow y$, $x \rightarrow a_2 \rightarrow y$, then $x\in[z,v]$; 
\item   
for all $p\in [z,v]$, the set $E^p= \{x \rightarrow p : x\notin [z,v]\}$  spans a hypercube cluster.
\end{enumerate}

Similarly, taking the dual versions of the above definitions, we obtain the concept of a \emph{lower  hypercube decomposition}. 

We introduce  the $\widetilde{R}$-polynomials by the following combinatorial interpretation due to Dyer (see \cite{DyeComp} and also \cite[Theorem~5.3.4]{BB}). Given a path $\Gamma= (x_0 \rightarrow x_1 \rightarrow \cdots \rightarrow x_r)$, we denote by $|\Gamma|$ the length of $\Gamma$ (i.e., $|\Gamma|= r$), and we define  $\{x_i : i\in[0,r]\}$ to be the {\em support} of $\Gamma$, denoted $\operatorname{supp} (\Gamma)$. 

\begin{thm}
\label{Dyertilde}
Let  $\preceq $ be a reflection ordering, and  $u,v \in \mathfrak S_n$. Then
$$\widetilde{R}_{u,v}(q)=\sum q^{|\Gamma|},$$
where the sum is over all increasing paths $\Gamma$ from $u$ to $v$.
\end{thm}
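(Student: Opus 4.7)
The plan is to prove the identity by induction on $\ell(v)-\ell(u)$, showing that the right-hand side $F_{u,v}(q):=\sum_\Gamma q^{|\Gamma|}$ (summed over $\preceq$-increasing Bruhat paths $\Gamma$ from $u$ to $v$) satisfies the same recursive characterization as $\widetilde{R}_{u,v}(q)$. First I would verify the base cases: if $u=v$, only the trivial length-$0$ path contributes, yielding $1=\widetilde{R}_{u,u}(q)$; if $u\not\leq v$, no paths exist, matching $\widetilde{R}_{u,v}(q)=0$. For the inductive step, I would fix a right descent $s\in D_R(v)$ and show that $F_{u,v}$ satisfies
\[
F_{u,v}(q) = \begin{cases} F_{us,vs}(q), & s \in D_R(u),\\[2pt] F_{us,vs}(q) + q\, F_{u,vs}(q), & s \notin D_R(u),\end{cases}
\]
which is the defining recursion of $\widetilde{R}_{u,v}(q)$.

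The key combinatorial move is to partition increasing paths $\Gamma=(u=x_0\to\cdots\to x_r=v)$ according to the behavior of their final step and its relation to $s$. Using the Bruhat lifting property for $s$, one can canonically associate to $\Gamma$ either a shorter increasing path in $[us,vs]$ or an increasing path in $[u,vs]$ together with a tacked-on $s$-labeled edge (accounting for the extra factor of $q$). Concretely, I would right-multiply a suffix of $\Gamma$ by $s$: the edge labels transform by conjugation $t \mapsto sts$, and the hypothesis $s \in D_R(v)$ allows me either to delete a final $s$-edge, if one is present, or to split off an $s$-edge at a carefully chosen position. The resulting correspondence is weight-preserving, so that $q^{|\Gamma|}$ matches on both sides of the recursion.

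The principal obstacle, and the technical heart of the argument, is ensuring that the proposed correspondence preserves the $\preceq$-increasing condition. This reduces to understanding how $\preceq$ restricts to the rank-two dihedral reflection subgroups $\langle s,t\rangle$ generated by $s$ and each reflection $t$ labelling an edge of $\Gamma$: by the classification of reflection orderings on dihedral groups, the restriction of $\preceq$ to the reflections of $\langle s,t \rangle$ must be one of exactly two canonical total orders, and this rigidity is precisely what controls whether conjugation by $s$ preserves or reverses the $\preceq$-order on the conjugate pair $(t,sts)$. Once this dihedral compatibility is established for each pair $(s,t)$ occurring along $\Gamma$, the correspondence respects the increasing property edge-by-edge, weight-matching gives the recursion above, and the induction closes.
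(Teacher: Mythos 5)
The paper does not prove Theorem~\ref{Dyertilde}: it is stated as a known result of Dyer, cited to \cite{DyeComp} and to \cite[Theorem~5.3.4]{BB}, so there is no argument in the paper to compare against.

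Your high-level plan (check base cases, then show the path-generating function $F_{u,v}$ satisfies the defining $\widetilde{R}$-recursion) is the standard frame, and you are right that the crux lies in how $\preceq$ restricts to the rank-two dihedral subgroups $\langle s,t\rangle$. But the specific mechanism you propose for the inductive step does not work as written. Right-multiplying a suffix of $\Gamma$ by $s$ conjugates its labels $t\mapsto sts$, and conjugation by $s$ is order-\emph{reversing}, not order-preserving, on the reflections of $\langle s,t\rangle\setminus\{s\}$ under any reflection ordering: in a dihedral group with generators $a,b$ and reflections $t_1=a,t_2=aba,\dots,t_m=b$, conjugation by $a$ fixes $t_1$ and maps $t_k\mapsto t_{m+2-k}$ for $k\ge 2$, which reverses the order among $t_2,\dots,t_m$. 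Thus the conjugated suffix becomes $\preceq$-decreasing, and no choice of split point repairs this; the ``dihedral rigidity'' you invoke is exactly what makes the map go the wrong way. The two standard routes around this are (i) to use a \emph{left} descent $s\in D_L(v)$ and left-multiply, which leaves edge labels unchanged (the delicate point then becomes the Bruhat length condition, handled by lifting), or (ii) to first prove that $F_{u,v}$ is independent of the choice of reflection ordering, by a sequence of elementary ``flips'' of $\preceq$-adjacent reflections (this is where the dihedral classification genuinely enters), and only then verify the recursion for a single conveniently adapted ordering. Your sketch conflates these two distinct uses of the dihedral analysis and leaves the decisive bijection unconstructed, so it has a real gap rather than merely omitted detail.
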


Following \cite{BM}, we give the next definition. We denote by $\operatorname{d}(x,y)$ the distance function of $B(W)$, i.e., the minimum of the lengths of paths  from $x$ to $y$. 
\begin{defn}
\label{scorcia}
 Let  $u,v,z \in W$ with  $z \in [u,v]$. We let 
$$
W_{[u,v]}^z := 
\{p\in[z,v] : \operatorname{supp} (\Gamma) \cap [z,v]= \{p\} \textrm{ for all paths $\Gamma$ from $u$ to $p$ with $|\Gamma|=\operatorname{d}(u,p)$}\},
$$
 and 
$$
\widetilde{R}_{u,v}^z(q) :=
\sum_{p\in W_{[u,v]}^z}  q^{\operatorname{d}(u,p)} \widetilde{R}_{p,v}(q).
$$
We call the elements in $W_{[u,v]}^z$ the \emph{(upper) shortcuts of $[u,v]$ with respect to $z$}. Furthermore, we say that an element $z$ in $[u,v]$ is an \emph{(upper) $R$-element for $[u,v]$} if $\widetilde{R}_{u,v}^z=\widetilde{R}_{u,v}$. 
 \end{defn}

\begin{rmk}
\label{decomposizioni canoniche}
The elements $\min([u,v]\cap W_{S\setminus \{s_{n-1}\}}v) $, $\min([u,v]\cap W_{S\setminus \{s_{1}\}} v)$, $\min([u,v]\cap  v W_{S\setminus \{s_{n-1}\}} )$, and $\min([u,v]\cap v  W_{S\setminus \{s_{1}\}} v)$ are always upper hypercube decompositions for the interval $[u,v]$ (possibly  not mutually distinct). We call these  elements  the {\em standard} upper hypercube decompositions for  $[u,v]$. Note that these hypercube decompositions are called canonical in \cite{BM}. 
\end{rmk}

\begin{rmk}
We note that the concept of an upper shortcut can be equivalently defined in the following way. Let $u,v \in W$, and $z$ be  an upper hypercube decomposition of $[u,v]$. An element $p$ in $[z,v]$ is an upper shortcut of $[u,v]$ with respect to $z$ if $\operatorname{d}(u,p) < \operatorname{d}(u,x)$ for all $x$ in $[z,p]$ satisfying $\operatorname{d}(x,p) = 1$.
\end{rmk}

\section{Amazing hypercube decompositions and double shortcuts}

The following definition strengthens the definition of a \emph{join hypercube decomposition} appearing in \cite{BM} (indeed, it is a stable version of it).

\begin{defn}
Let $u,v \in W$. We say that an upper hypercube decomposition $z$ of $[u,v]$ is  \emph{amazing}  provided that,
for all $x \in [u,v]$, the intersection $[z,v]\cap [x,v]$ has a minimum $z\vee x$, and this minimum $z \vee x$ is a (necessarily  amazing)   hypercube decomposition of $[x,v]$. Furthermore, we say that an amazing hypercube decomposition $z$ of $[u,v]$ is an \emph{(upper) amazing $R$-element for $[u,v]$} if $z \vee x$ is an (upper) $R$-element for $[x,v]$ for all $x \in [u,v]$.
\end{defn}
\begin{rmk}
Let $u,v \in W$ and $z$ be a standard hypercube decomposition of $[u,v]$. Since $z \vee x$ exists and is a standard hypercube decomposition of $[x,v]$ (of the same kind), we have that $z$ is an amazing hypercube decomposition, and, by \cite[Corollary~3.10]{BM}, an amazing $R$-element. 
\end{rmk}
The following conjecture is a weak version of  Conjecture 6.1  of \cite{BM}, which has been verified up to $W=S_6$.
\begin{con}
\label{congettura}
 Let $W$ be a Coxeter group of type $A$, $u,v \in W$, and $z\in[u,v]$ be an amazing upper hypercube decomposition. Then $z$ is an $R$-element, i.e.
$$
 \widetilde{R}_{u,v}=\sum_{p\in W_{[u,v]}^z}  q^{\operatorname{d}(u,p)} \widetilde{R}_{p,v}.
$$
 \end{con}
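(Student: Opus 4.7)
The plan is to use Dyer's combinatorial interpretation of $\widetilde{R}_{u,v}(q)$ (Theorem~\ref{Dyertilde}) and to prove the identity by partitioning the set of increasing paths from $u$ to $v$ according to their first entry point into $[z,v]$. Fix a reflection ordering $\preceq$. For any $\preceq$-increasing path $\Gamma$ from $u$ to $v$ in $B(W)$, let $p(\Gamma)\in[z,v]$ denote the earliest vertex of $\Gamma$ lying in $[z,v]$; this is well defined since $v\in[z,v]$. Such a $\Gamma$ factors as a concatenation $\Gamma=\Gamma_1\cdot\Gamma_2$ with $\Gamma_1\colon u\to p(\Gamma)$ having interior disjoint from $[z,v]$ and $\Gamma_2\colon p(\Gamma)\to v$ still $\preceq$-increasing.

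The first technical claim is that $p(\Gamma)\in W_{[u,v]}^z$. Using the alternative characterization of upper shortcuts recalled at the end of Section~2 (namely, $p$ is a shortcut iff $\operatorname{d}(u,p)<\operatorname{d}(u,x)$ for every $x\in[z,p]$ with $\operatorname{d}(x,p)=1$), this amounts to showing that no lower Bruhat neighbour $x$ of $p(\Gamma)$ inside $[z,v]$ satisfies $\operatorname{d}(u,x)\le\operatorname{d}(u,p(\Gamma))$. I would exploit that $E^{p(\Gamma)}$ spans a hypercube cluster: the last edge of $\Gamma_1$ lies in $E^{p(\Gamma)}$ by construction, and the diamond-completeness clause in the definition of an upper hypercube decomposition forces any competing short path from $u$ to such an $x$ to actually pass through $p(\Gamma)$.

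The crucial step, and the main obstacle, is the following assertion: for each shortcut $p\in W_{[u,v]}^z$ and each $\preceq$-increasing path $\Gamma_2\colon p\to v$, the formal sum
\[
\sum_{\Gamma_1} q^{|\Gamma_1|}=q^{\operatorname{d}(u,p)},
\]
where $\Gamma_1$ ranges over $\preceq$-increasing paths from $u$ to $p$ whose support meets $[z,v]$ only at $p$ and whose final reflection is strictly $\preceq$-smaller than the first reflection of $\Gamma_2$. Equivalently, there is exactly one such $\Gamma_1$ and it has length $\operatorname{d}(u,p)$. Summing over $(p,\Gamma_2)$ would then yield the desired identity. The amazing hypothesis is essential here: arguing by induction on $\ell(v)-\ell(u)$ and exploiting that $z\vee x$ is an amazing decomposition of $[x,v]$ for every atom $x\gtrdot u$, one hopes to identify the geodesic first-step out of $u$ uniquely, while the hypercube-cluster property at $p$ governs how several such geodesics could converge at $p$.

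As a fall-back, one could attempt an induction through the standard recursion $\widetilde{R}_{u,v}=q\,\widetilde{R}_{us,vs}+(q-1)\,\widetilde{R}_{u,vs}$ at a right descent $s$ of $v$. The subtle point is that $z$ does not in general descend to an amazing decomposition of $[u,vs]$ or $[us,vs]$; one would have to either replace $z$ by an appropriate amazing decomposition of the smaller interval, or reduce to the \emph{standard} decompositions of Remark~\ref{decomposizioni canoniche}, for which the conclusion is already known via \cite[Corollary~3.10]{BM}. Controlling the interaction between the reflection-order constraint and the global hypercube-cluster structure attached to all vertices of $[z,v]$ simultaneously is what I expect to be the decisive technical difficulty.
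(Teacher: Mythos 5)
The statement you were asked to prove is Conjecture~\ref{congettura}: the paper does not prove it, and in fact the entire point of the paper is to \emph{reduce} it to a purely combinatorial statement (Conjecture~\ref{congetturaEM}, via Theorem~\ref{thm:conj-implies-conj}) and to verify an analog for co-elementary intervals in the appendix (Theorem~\ref{thm:coelementary-double-hc}). So there is no proof in the paper to compare your attempt against, and a complete proof would be a genuine breakthrough implying the Combinatorial Invariance Conjecture for the symmetric group.

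Your proposal is not a proof: you candidly flag two unresolved assertions, and both are serious. First, the claim that $p(\Gamma)\in W^z_{[u,v]}$ for every $\preceq$-increasing path $\Gamma$ from $u$ to $v$ is not established; the definition of a shortcut quantifies over \emph{all} geodesics from $u$ to $p(\Gamma)$, not just the initial segment $\Gamma_1$ of $\Gamma$, and your appeal to diamond completeness and the hypercube-cluster property is only a gesture. Second, and more fundamentally, the ``crucial step'' — that for each shortcut $p$ and each increasing $\Gamma_2\colon p\to v$ the weighted count of admissible $\Gamma_1$'s is exactly $q^{\operatorname{d}(u,p)}$ — is essentially a bijective reformulation of the conjecture itself, not a lemma one can hope to verify independently without new ideas. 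Moreover, you fix an \emph{arbitrary} reflection order $\preceq$, whereas the only results of this flavor known to hold (e.g.\ \cite[Theorem~3.10]{BG}, which the appendix invokes) require a reflection order carefully compatible with the hypercube decomposition (``property (E)''); for a generic $\preceq$ the first entry point of an increasing path into $[z,v]$ need not be a shortcut at all. Your fall-back via the recursion $\widetilde{R}_{u,v}=q\widetilde{R}_{us,vs}+(q-1)\widetilde{R}_{u,vs}$ runs into the difficulty you already note, namely that amazing decompositions do not obviously restrict well along multiplication by a descent. In short, your outline correctly identifies where the difficulty lies, but it does not close either gap, and the statement remains — as in the paper — an open conjecture.
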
 

Conjecture~\ref{congettura} implies the Combinatorial Invariance Conjecture.
\begin{defn}
Let $u,v \in W$, and $z$ and $z'$ be two amazing upper hypercube decomposition of $[u,v]$. We say that an element $b$ in $[z,v]\cap [z',v]$ is an (upper)  $(z,z')$-double shortcut of $[u,v]$ if there exists $p\in W^z_{[u,v]}$ such that $b\in W^{z'\vee p}_{[p,v]}$.  We denote by $DS(z,z')$ the multiset 
$$\{(\operatorname{d}(u,p) + \operatorname{d}(p,b) , b ) : p\in W^z_{[u,v]} \text{ and } b\in W^{z'\vee p}_{[p,v]}  \}.$$
\end{defn}

Figure \ref{espofinalmente} depicts an interval $[u,v]$, two amazing upper hypercube decompositions  $z$ and $z'$  of $[u,v]$, an upper shortcut $p$ of $[u,v]$ with respect to $z$, and an upper shortcut $b$ of $[p,v]$ with respect to $z'\vee p$. The element  $b$  is hence a $(z,z')$-double shortcut of $[u,v]$.
\begin{figure}[h]
    \centering
\scalebox{.9}{
\begin{tikzpicture}

\draw (0,0) node {$\bullet$};
\draw (0,-0.2) node {$u$};

\draw (0,8) node {$\bullet$};
\draw (0,8+0.2) node {$v$};

\draw (4,2) node {$\bullet$};
\draw (4,2-0.2) node {$z'$};

\draw (-3,2) node {$\bullet$};
\draw (-3,2-0.2) node {$z$};

\draw (-2.5,3.5) node {$\bullet$};
\draw (-2.5,3.5-0.3) node {$p$};


\draw (-0.5,4.5) node {$\bullet$};
\draw (-0.5+0.7,4.5) node {$z'\vee p$};

\draw (0,7) node {$\bullet$};
\draw (0,7+0.3) node {$b$};

\draw (5,4) to[out=90,in=-15] node [sloped,right] {} (0,8);
\draw (0,0) to[out=0,in=-90] node [sloped,right] {} (5,4);

\draw (-5,4) to[out=90,in=180+15] node [sloped,right] {} (0,8);
\draw (0,0) to[out=180,in=-90] node [sloped,right] {} (-5,4);

\draw (-3,2) to[out=180-25,in=180+15] node [sloped,right] {} (0,8);
\draw (-3,2) to[out=0,in=180+45] node [sloped,right] {} (0.5,3.2);
\draw (0.5,3.2) to[out=45,in=-15] node [sloped,right] {} (0,8);

\draw (4,2) to[out=180,in=-45] node [sloped,left] {} (-0.5,4.5);
\draw (-0.5,4.5) to[out=90+45,in=180+15] node [sloped,right] {} (0,8);
\draw (4,2) to[out=40,in=-15] node [sloped,right] {} (0,8);

\draw (-2.5,3.5) to[out=180-25,in=180+15] node [sloped,right] {} (0,8);
\draw (-2.5,3.5) to[out=0,in=-115] node [sloped,right] {} (-0.5,4.5);
\draw (-0.5,4.5) to[out=65,in=-15] node [sloped,right] {} (0,8);

\end{tikzpicture}
 }
    \caption{A $(z,z')$-double shortcut of $[u,v]$.}
    \label{espofinalmente}
\end{figure}

\begin{thm}
\label{bologna}
 Let $W$ be a Coxeter group of type $A$, $u,v \in W$, and $z,z'\in[u,v]$ be two amazing upper hypercube decomposition. Suppose that 
\begin{enumerate}
\item
\label{uno}
 $z$ is an amazing $R$-element,
\item $z'\vee x$ is  an (amazing) $R$-element  for all $x\in [u,v]\setminus\{u\}$,
\label{due}
\item 
\label{tre}
$DS(z,z') =DS(z',z)$ (as multisets).
\end{enumerate}
Then, $z'$ is an (amazing) $R$-element.
\end{thm}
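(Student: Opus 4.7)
The plan is a purely formal chain of substitutions: expand $\widetilde{R}_{u,v}$ via $z$ and the inner $R$-elements $z'\vee p$, apply (3) to re-index the resulting double sum, and then collapse the new double sum via the inner $R$-elements $z\vee p'$. Specifically, by (1) applied at $x=u$, $z$ is an $R$-element of $[u,v]$, and by (2) each $z'\vee p$ (for $p\in W^z_{[u,v]}$) is an $R$-element of $[p,v]$; substituting and matching with the definition of $DS(z,z')$ gives
$$\widetilde R_{u,v}\;=\;\sum_{p\in W^z_{[u,v]}}\sum_{b\in W^{z'\vee p}_{[p,v]}} q^{\operatorname{d}(u,p)+\operatorname{d}(p,b)}\,\widetilde R_{b,v}\;=\;\sum_{(k,b)\in DS(z,z')} q^k\,\widetilde R_{b,v}.$$

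Hypothesis (3) replaces $DS(z,z')$ by $DS(z',z)$ on the right-hand side, and unfolding the latter yields
$$\widetilde R_{u,v}\;=\;\sum_{p'\in W^{z'}_{[u,v]}} q^{\operatorname{d}(u,p')}\sum_{b\in W^{z\vee p'}_{[p',v]}} q^{\operatorname{d}(p',b)}\,\widetilde R_{b,v}.$$
Since $z$ is \emph{amazing}, (1) applied at $x=p'$ gives that $z\vee p'$ is an $R$-element of $[p',v]$, so the inner sum collapses to $\widetilde R_{p',v}$ and we conclude
$$\widetilde R_{u,v}\;=\;\sum_{p'\in W^{z'}_{[u,v]}} q^{\operatorname{d}(u,p')}\,\widetilde R_{p',v},$$
which is exactly the assertion that $z'$ is an $R$-element of $[u,v]$. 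Combining this with (2) then shows that $z'\vee x$ is an $R$-element of $[x,v]$ for every $x\in[u,v]$, so $z'$ is promoted to an amazing $R$-element as required.

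The argument presents no real analytic or combinatorial obstacle: the multiset identity in (3) is engineered precisely to license the swap between the two nested expansions. The only care needed is bookkeeping --- in particular, one must verify that every $p\in W^z_{[u,v]}$ and $p'\in W^{z'}_{[u,v]}$ satisfies $p,p'\neq u$, so that hypotheses (2) and (1) can be invoked on $[p,v]$ and $[p',v]$ respectively; this follows from $p\in[z,v]$ and $p'\in[z',v]$ in the nontrivial cases $z,z'\neq u$, the degenerate cases being tautological.
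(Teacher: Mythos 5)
Your proposal is correct and follows essentially the same chain of substitutions as the paper's proof: expand via $z$ and the inner elements $z'\vee p$, recognize $DS(z,z')$, swap to $DS(z',z)$ via hypothesis (3), unfold, and collapse using the amazingness of $z$ at $x=p'$. Your added bookkeeping remark that $p\neq u$ (so hypothesis (2) applies on $[p,v]$) is a small point the paper glosses over, but otherwise the argument is identical.
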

\begin{proof}
We have:
\begin{eqnarray*}
 \widetilde{R}_{u,v}&=& \sum_{p\in W_{[u,v]}^{z}}  q^{\operatorname{d}(u,p)} \widetilde{R}_{p,v}\\
& =&  \sum_{p\in W_{[u,v]}^{z}}  q^{\operatorname{d}(u,p)} \big( \sum_{b\in W_{[p,v]}^{z'\vee p}}  q^{\operatorname{d}(p,b)} \widetilde{R}_{b,v} \big) \\
&=&  \sum_{(a,b) \in DS(z,z')}  q^a \widetilde{R}_{b,v}\\
&=&  \sum_{(a,b) \in DS(z',z)}  q^a \widetilde{R}_{b,v}\\
& =&  \sum_{p'\in W_{[u,v]}^{z'}}  q^{\operatorname{d}(u,p')} \big( \sum_{b\in W_{[p',v]}^{z\vee p'}}  q^{\operatorname{d}(p',b)} \widetilde{R}_{b,v} \big) \\
&=& \sum_{p'\in W_{[u,v]}^{z'}}  q^{\operatorname{d}(u,p')} \widetilde{R}_{p',v},
\end{eqnarray*}
where the first equation follows by (\ref{uno}),  the second equation follows by (\ref{due}), the third equation by the definition of the multiset $DS(z,z')$, the fourth by (\ref{tre}), the fifth by the definition of the multiset $DS(z',z)$, and the sixth equation  follows by (\ref{uno}).
\end{proof}

Fix $u,v \in W$. Let us define an equivalence relation on the set of amazing hypercube decompositions of $[u,v]$   as the transitive closure of the relation $z \sim z'$ if $DS(z,z')= DS(z',z)$.   
\begin{con}
\label{congetturaEM0}
 Let $W$ be a Coxeter group of type $A$. 
The  above relation is trivial (i.e., it has only one equivalence class).
 \end{con}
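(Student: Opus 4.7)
The plan is to establish the conjecture in two stages: (i) show that the four standard hypercube decompositions from Remark~\ref{decomposizioni canoniche} all lie in a single equivalence class, and (ii) show that every amazing hypercube decomposition of $[u,v]$ is equivalent to at least one of them. Transitivity of the generated relation then closes the argument. I would run stage (ii) as a double induction, outer on $\ell(v)-\ell(u)$ and inner on some measure of ``distance'' (for instance Bruhat distance, or the cardinality of $[z,z_*]$) between $z$ and a fixed standard representative $z_*$.

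For stage (i), I would attempt to produce, for each pair of standard decompositions $z,z'$, an explicit bijection $\varphi : DS(z,z') \to DS(z',z)$ preserving both coordinates. Concretely, given $(p,b)$ representing $(\operatorname{d}(u,p)+\operatorname{d}(p,b), b) \in DS(z,z')$ with $p\in W^z_{[u,v]}$ and $b\in W^{z'\vee p}_{[p,v]}$, the bijection should output the unique pair $(p', b)$ with $p'\in W^{z'}_{[u,v]}$ and $b\in W^{z\vee p'}_{[p',v]}$ having the same distance sum. The hope is that because standard decompositions are defined by parabolic minima, the shortcut sets $W^z_{[u,v]}$ admit a clean descent-theoretic description making $p'$ visible as a function of $p$ and $b$. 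A reflection ordering $\preceq$ (Theorem~\ref{Dyertilde}) compatible with both decompositions would allow one to identify $p$ and $p'$ as the lexicographically first ``passages'' into $[z,v]$ and $[z',v]$ along the unique $\preceq$-minimal increasing path from $u$ to $b$. Verifying that such a compatible reflection ordering exists for the four standard cases appears to be the cleanest route.

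For stage (ii), given an amazing hypercube decomposition $z$ and a standard $z_*$, one would like to build a chain of equivalences from $z$ to $z_*$ by moving through intermediate amazing decompositions. The inductive hypothesis on $[z,v]$ (strictly shorter whenever $z\neq u$) produces equivalences \emph{inside} $[z,v]$ between $z_*\vee z$ and every other amazing decomposition of $[z,v]$. The defining property of an amazing hypercube decomposition, namely that $z\vee x$ is an amazing hypercube decomposition of $[x,v]$ for every $x\in[u,v]$, should permit these downstairs equivalences to be assembled into an identity $DS(z,z_{\mathrm{int}}) = DS(z_{\mathrm{int}},z)$ for a well-chosen intermediate $z_{\mathrm{int}}$ of $[u,v]$ that ``agrees with $z_*$ above $z$''. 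A careful choice of $z_{\mathrm{int}}$ and a diagram-chase, using the fact that each double shortcut through $z$ factors through some $p\in W^z_{[u,v]}$ and is then controlled in $[p,v]$ by a smaller interval for which the statement already holds, should suffice.

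The main obstacle is stage (i): even for two standard decompositions the symmetry $DS(z,z') = DS(z',z)$ is not transparent, because a double shortcut encodes a specific two-step factorization of a path from $u$ to $b$, and swapping the two decompositions forces a complete re-factorization. A successful bijection will almost certainly need both a reflection-ordering argument and the diamond-completeness clause in the definition of a hypercube decomposition, the latter constraining how minimal-length paths can cross into $[z,v]$ or $[z',v]$. I expect that any genuine progress on this bijection would simultaneously illuminate Conjecture~\ref{congettura}, since symmetry of $DS$ is strictly stronger than the equality of $\widetilde{R}$-polynomials that would be forced by both $z$ and $z'$ being $R$-elements.
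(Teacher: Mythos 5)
First, be aware that what you are attacking is labeled a \emph{conjecture} in the paper, verified only computationally up to $S_6$; the body of the paper contains no proof of it. The only supporting result is Theorem~\ref{thm:coelementary-double-hc} in the appendix, which is restricted to co-elementary intervals and phrased in terms of double \emph{hypercubes} $DH$ rather than double \emph{shortcuts} $DS$, with the passage from $DH$ back to $DS$ itself resting on a further unproved bijection. So there is no complete proof in the paper to compare your proposal against; you should treat your target as genuinely open.

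That said, your stage (i) instinct to exploit a reflection ordering compatible with both decompositions is genuinely aligned with the appendix's method. Its engine is Lemma~\ref{lem:incpaths}: for two reflection orders each making the edges outside $[z,v]$ precede those inside, the count of increasing length-$k$ paths from $u$ to $p$ with support meeting $[z,v]$ only at $p$ is independent of the order. The appendix constructs orders $\leq_z$ and $\leq_{z'}$ compatible simultaneously with $z$ and with $z\vee z'$ inside $[z,v]$ (and symmetrically), then applies \cite[Theorem~3.10]{BG} to read $\leq_z$-increasing paths as pairs of hypercubes, yielding $DH(z,z')=DH(z',z)$ directly for \emph{any} two strong amazing decompositions. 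Because it compares arbitrary pairs head-on, the appendix never needs your stage (ii) reduction to standard decompositions, so that reduction is an added burden rather than a necessary ingredient. Indeed, the computational evidence quoted after Conjecture~\ref{congetturaEM} is that $DS(z,z')=DS(z',z)$ holds outright for all pairs; building transitive chains through standard decompositions aims below what the data suggest is true.

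Stage (ii) also has a concrete gap. Your induction supplies equivalences among amazing decompositions \emph{of the subinterval} $[z,v]$, with $DS$-multisets computed inside $[z,v]$. But a double shortcut of $[u,v]$ begins with a minimal-length Bruhat-graph path starting at $u\notin[z,v]$, and the downstairs data do not see that first leg at all. The amazing condition lets you restrict to $[x,v]$ for $x\in[u,v]$, but you would need a mechanism that lifts an amazing decomposition of $[z,v]$ to an amazing decomposition $z_{\mathrm{int}}$ of $[u,v]$ and controls how the $DS$-multisets transform under that lift; ``a careful choice of $z_{\mathrm{int}}$ and a diagram-chase'' does not yet supply this. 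Until that lifting step is made precise, stage (ii) does not close, and the overall architecture remains a sketch rather than a proof.
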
 
The following conjecture is a weakening of Conjecture~\ref{congetturaEM0}.
\begin{con}
\label{congetturaEM}
 Let $W$ be a Coxeter group of type $A$. 
Every equivalence class of the above relation contains an amazing $R$-element.
 \end{con}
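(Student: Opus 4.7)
The plan is to prove Conjecture~\ref{congetturaEM} by induction on the rank $\ell(v)-\ell(u)$ of the Bruhat interval, with inductive hypothesis the full statement of Conjecture~\ref{congettura} for every interval of strictly smaller rank. Intervals of rank $\leq 1$ give a trivial base case. Under this hypothesis, for any pair $z,z'$ of amazing hypercube decompositions of $[u,v]$, condition~(\ref{due}) of Theorem~\ref{bologna} is automatically satisfied, since every interval $[x,v]$ with $x\neq u$ has strictly smaller rank and hence every amazing hypercube decomposition of it, in particular $z'\vee x$, is already an $R$-element.

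As a consequence, Theorem~\ref{bologna} propagates the amazing-$R$-element property along every edge of the generating relation $z\sim z'$ and, by transitivity, across entire equivalence classes: each class is either entirely composed of amazing $R$-elements or contains none. This reduces Conjecture~\ref{congetturaEM} to exhibiting \emph{at least one} amazing $R$-element in each equivalence class of amazing hypercube decompositions of $[u,v]$. The class(es) of the standard hypercube decompositions of Remark~\ref{decomposizioni canoniche} are already accounted for, since, as noted immediately after the definition of amazing, standard decompositions are amazing $R$-elements; the residual problem is the equivalence classes disjoint from the standard ones.

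To treat a non-standard class, I would search for an extremal representative that can be proved directly to be an $R$-element. A natural candidate is a $z_0$ that is $\leq$-minimal (in Bruhat order) in its class: minimality, combined with the hypercube-cluster axioms that make $z_0$ amazing, should give enough control over the increasing paths from $u$ entering $[z_0,v]$ to apply Dyer's description (Theorem~\ref{Dyertilde}) and recover the defining identity $\widetilde{R}_{u,v}=\sum_{p\in W^{z_0}_{[u,v]}}q^{\operatorname{d}(u,p)}\widetilde{R}_{p,v}$. A complementary approach is to upgrade the goal to the stronger Conjecture~\ref{congetturaEM0} and produce, for every $z'$, an explicit $\sim$-chain back to a standard hypercube decomposition via a sequence of local double-shortcut moves, each of which is shown to preserve the $DS$-symmetry.

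The main obstacle, in either route, is the purely combinatorial step of establishing $DS(z,z')=DS(z',z)$, or its violation, from structural data alone: the definition of $\sim$ is transparent only through these multisets, and extremality in Bruhat order is not a priori compatible with the transitive closure of that symmetry. What is needed is a canonical bijection between paths in $DS(z,z')$ and $DS(z',z)$ that tracks how the local hypercube-cluster structure is preserved when passing from $z$ to $z\vee p$, and it is here that I expect the real work to concentrate. As a first diagnostic, the computational data through $S_6$ cited in the introduction should be inspected to determine whether non-standard equivalence classes actually occur, which would dictate whether to aim for the stronger Conjecture~\ref{congetturaEM0} or settle for the extremality strategy above.
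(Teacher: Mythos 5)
The statement you are trying to prove is an open conjecture; the paper does not prove it, but only reports a computer verification up to $S_6$ and proves, in the appendix, a partial analog for co-elementary intervals. So there is no reference proof to compare your attempt against, and the proposal must stand on its own.

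It does not stand. The inductive framework and the propagation step via Theorem~\ref{bologna} are sound---indeed they are precisely the mechanism inside the paper's Theorem~\ref{thm:conj-implies-conj}, which uses a minimal-counterexample form of your argument to show Conjecture~\ref{congetturaEM} implies Conjecture~\ref{congettura}. But after that propagation, your ``reduction'' lands verbatim on the statement of Conjecture~\ref{congetturaEM}: exhibit one amazing $R$-element in each equivalence class. Noting that the standard hypercube decompositions of Remark~\ref{decomposizioni canoniche} settle the classes containing them is correct, but the residual non-standard classes are the entire difficulty, and for them you offer only two unargued speculations: that a Bruhat-minimal representative ``should give enough control'' to verify the $\widetilde{R}$-identity directly, and that explicit $\sim$-chains back to standard decompositions exist---the latter being of the same open nature as Conjecture~\ref{congetturaEM0}. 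You are candid that ``it is here that I expect the real work to concentrate,'' and you are right; but that means no part of the real work has been done, so this is not a proof but a restatement of the problem wrapped in a correct but content-free reduction.

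For what it is worth, the only proof technique the paper does supply is in the appendix: for co-elementary intervals, Theorem~\ref{thm:coelementary-double-hc} establishes the symmetry $DH(z,z')=DH(z',z)$ (a ``double hypercube'' analog of the $DS$-symmetry) by constructing two reflection orders adapted to $z$ and $z'$, translating both multisets into counts of increasing paths, and comparing them via Lemma~\ref{lem:incpaths} and Theorem~\ref{Dyertilde} together with results of \cite{BG}. That direct, order-theoretic attack on the symmetry itself---rather than on the propagation of the $R$-element property---is orthogonal to your strategy and is the concrete mechanism one would need to extend to approach the conjecture.
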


\begin{rmk}
Up to $S_6$, we checked by computer that, in fact,  a much stronger statement than Conjecture~\ref{congetturaEM} holds:
$DS(z,z')=DS(z',z)$ for all amazing hypercube decompositions of any interval.
\end{rmk}
 \begin{thm}
 \label{thm:conj-implies-conj}
 Conjecture~\ref{congetturaEM} implies Conjecture~\ref{congettura}.
 \end{thm}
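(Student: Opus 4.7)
The plan is a double induction: an outer induction on the rank $\ell(v)-\ell(u)$ of $[u,v]$, and an inner downward traversal along an equivalence chain supplied by Conjecture~\ref{congetturaEM}, powered by Theorem~\ref{bologna}. The base of the outer induction is trivial: when $z = u$ we have $W^z_{[u,v]} = \{u\}$ and the $R$-element identity is immediate, which in particular handles $\ell(v) = \ell(u)$.

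For the outer inductive step, I would fix $[u,v]$ and an amazing upper hypercube decomposition $z$, assuming the statement for all strictly shorter intervals; we may take $z \neq u$. The key observation to record here is that the outer hypothesis automatically supplies hypothesis (\ref{due}) of Theorem~\ref{bologna} for every pair of amazing hypercube decompositions of $[u,v]$: for any such pair $w, w'$ and any $x \in [u,v] \setminus \{u\}$, the amazingness of $w'$ guarantees that $w' \vee x$ is an amazing hypercube decomposition of the strictly shorter interval $[x,v]$, so by the outer hypothesis it is an $R$-element.

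Next I would invoke Conjecture~\ref{congetturaEM} to produce an amazing $R$-element $z^\ast$ in the equivalence class of $z$, together with a chain $z = z_0, z_1, \ldots, z_k = z^\ast$ of amazing hypercube decompositions with $DS(z_i, z_{i+1}) = DS(z_{i+1}, z_i)$ for each $i$. Descending this chain by downward induction on $i$, I show that each $z_i$ is an amazing $R$-element: from the assumption that $z_{i+1}$ is an amazing $R$-element, applying Theorem~\ref{bologna} with $z_{i+1}$ playing the role of $z$ and $z_i$ playing the role of $z'$ — hypothesis (\ref{uno}) being the inner step hypothesis, (\ref{due}) automatic by the previous paragraph, (\ref{tre}) the chain condition — yields that $z_i$ is an $R$-element; combined with the outer hypothesis that $z_i \vee x$ is an $R$-element of $[x,v]$ for every $x \neq u$, this upgrades $z_i$ to an amazing $R$-element. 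Setting $i = 0$ closes the outer step.

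I do not expect any serious obstacle: once the framework is in place, the argument is essentially bookkeeping. The only genuine content is in organising the double induction so that hypothesis (\ref{due}) of Theorem~\ref{bologna} comes for free at every link of the chain, reducing the whole implication to feeding the double-shortcut equalities into Theorem~\ref{bologna} one step at a time.
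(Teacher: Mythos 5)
Your argument is correct and is essentially the paper's own, merely phrased as a direct outer induction on $\ell(v)-\ell(u)$ rather than via a minimal counterexample: both versions use the ``shorter intervals already satisfy Conjecture~\ref{congettura}'' step to discharge hypothesis~(\ref{due}) of Theorem~\ref{bologna} for free, and then walk down the equivalence chain supplied by Conjecture~\ref{congetturaEM}, applying Theorem~\ref{bologna} at each link. You spell out the bookkeeping (the base case $z=u$, and the upgrade from $R$-element to amazing $R$-element at each link) that the paper leaves implicit in its one-line ``iteratively applying Theorem~\ref{bologna}.''
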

\begin{proof}
Towards a contradiction, let $[u,v]$ be an interval of minimal length having an amazing hypercube decomposition $z$ that fails to be an $R$-element. By hypothesis, there exists a sequence $(z_0, \ldots, z_r)$ of amazing hypercube decompositions such that $z_0 = z$,  and $z_r$ is an amazing $R$-element, and $DS(z_{i-1},z_{i})= DS(z_{i},z_{i-1})$ for each $i\in [r]$.  One gets a contradiction by iteratively applying Theorem~\ref{bologna}. 
\end{proof}

The following result supports Conjecture~\ref{congetturaEM}.
\begin{thm}
Let $(W, S)$, $(W_1, S_1)$, and $(W_2, S_2)$ be three Coxeter systems of type $A$ and $u, v \in W$, $u_i, v_i \in W_i$ for $i = 1, 2$ be such that $[u, v] \cong [u_1, v_1] \times [u_2, v_2]$ (isomorphic as
posets), and let $x\mapsto (x_1, x_2)$ be an isomorphism. Let $x, y \in [u, v]$ and $z,z'$ be two amazing hypercube decompositions of $[u,v]$ which correspond to $ (z_1, z_2)$ and $ (z'_1, z'_2)$, respectively (see \cite[Theorem~5.6]{BM}).  Suppose $DS(z_1,z_1')=DS(z_1',z_1)$ and $DS(z_2,z_2')=DS(z_2',z_2)$. Then $$DS(z,z')=DS(z',z).$$
\end{thm}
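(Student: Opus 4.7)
The plan is to reduce the identity $DS(z,z')=DS(z',z)$ to a convolution of the analogous identities on each factor, after showing that every ingredient appearing in the definition of the double-shortcut multiset is compatible with the product decomposition $[u,v]\cong[u_1,v_1]\times[u_2,v_2]$.

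First I would record the basic compatibilities. By \cite[Theorem~5.6]{BM}, under the identification $x\mapsto(x_1,x_2)$ the Bruhat graph of $[u,v]$ is the cartesian product of the Bruhat graphs of $[u_i,v_i]$: every directed edge changes exactly one coordinate. Hence $\operatorname{d}(x,y)=\operatorname{d}(x_1,y_1)+\operatorname{d}(x_2,y_2)$, and a minimal path from $u$ to any $p=(p_1,p_2)$ is exactly a shuffle of a minimal path $\Gamma_1$ from $u_1$ to $p_1$ and a minimal path $\Gamma_2$ from $u_2$ to $p_2$. Moreover, since $[z',v]\cap[x,v]=([z'_1,v_1]\cap[x_1,v_1])\times([z'_2,v_2]\cap[x_2,v_2])$, the join behaves componentwise, giving $z'\vee x=(z'_1\vee x_1,\,z'_2\vee x_2)$.

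Next I would establish the crucial factorization $W^{z}_{[u,v]}=W^{z_1}_{[u_1,v_1]}\times W^{z_2}_{[u_2,v_2]}$, from which the analogous equality $W^{z'\vee p}_{[p,v]}=W^{z'_1\vee p_1}_{[p_1,v_1]}\times W^{z'_2\vee p_2}_{[p_2,v_2]}$ follows by the same reasoning. For the inclusion $\supseteq$, take any minimal path from $u$ to $p$ and any vertex $(x_1,x_2)$ of it lying in $[z,v]$: since $x_i$ is the projection to factor $i$ of a vertex of a minimal path from $u_i$ to $p_i$ and satisfies $x_i\ge z_i$, the assumption $p_i\in W^{z_i}_{[u_i,v_i]}$ forces $x_i=p_i$ for $i=1,2$. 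For the reverse inclusion, if (say) some minimal path in factor $1$ passes through a vertex $x_1\in[z_1,v_1]\setminus\{p_1\}$, I would consider the shuffle that completes the factor $2$ portion first and the factor $1$ portion second; the intermediate vertex $(x_1,p_2)$ of the resulting minimal path from $u$ to $p$ then lies in $[z,v]$ and is distinct from $p$, contradicting $p\in W^{z}_{[u,v]}$.

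With these factorizations the remaining argument is bookkeeping. Each pair $(p,b)$ contributing to $DS(z,z')$ decomposes uniquely as $((p_1,p_2),(b_1,b_2))$ with $(p_i,b_i)$ contributing to $DS(z_i,z'_i)$, and the attached distance $\operatorname{d}(u,p)+\operatorname{d}(p,b)$ becomes $a_1+a_2$, where $a_i=\operatorname{d}(u_i,p_i)+\operatorname{d}(p_i,b_i)$. Therefore the multiplicity of $(a,(b_1,b_2))$ in $DS(z,z')$ equals the convolution
\begin{equation*}
\sum_{a_1+a_2=a}\mu_1(a_1,b_1)\,\mu_2(a_2,b_2),
\end{equation*}
where $\mu_i$ is the multiplicity function of $DS(z_i,z'_i)$; the same formula with the multiplicity function of $DS(z'_i,z_i)$ expresses $DS(z',z)$, and the hypothesis $DS(z_i,z'_i)=DS(z'_i,z_i)$ forces equality. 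I expect the principal obstacle to be the factorization of the shortcut set $W^{z}_{[u,v]}$: it requires choosing the shuffle order deliberately so as to expose the obstructing vertex inside $[z,v]$; once the distance, join, and shortcut factorizations are in place, the conclusion is purely formal.
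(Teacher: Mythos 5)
Your argument is correct and mirrors the paper's proof: both rest on the factorization of the shortcut sets under the product decomposition and then reduce the multiset equality $DS(z,z')=DS(z',z)$ to a convolution of the factor multiplicities. The only difference is that you re-derive the factorization $W^z_{[u,v]}=W^{z_1}_{[u_1,v_1]}\times W^{z_2}_{[u_2,v_2]}$ (and its relatives for $W^{z'\vee p}_{[p,v]}$) from the Bruhat-graph product structure via the shuffle argument, whereas the paper obtains exactly these equivalences by citing \cite[Proposition~5.7]{BM}.
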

\begin{proof}
By \cite[Proposition~5.7]{BM}), for $p,p',b,b' \in [u,v]$, we have:
\begin{itemize}
\item  $p\in W^z_{[u,v]}$ if and only if  $p_1\in W^{z_1}_{[u_1,v_1]}$ and $p_2\in W^{z_2}_{[u_2,v_2]}$; 
\item  $p'\in W^{z'}_{[u,v]}$ if and only if  $p'_1\in W^{z'_1}_{[u_1,v_1]}$ and $p'_2\in W^{z'_2}_{[u_2,v_2]}$;
\item  $b\in W^{z'\vee p}_{[p,v]}$ if and only if  $b_1\in W^{z'_1 \vee p_1}_{[p_1,v_1]}$ and $b_2\in W^{z'_2 \vee p_2}_{[p_2,v_2]}$; 
\item  $b'\in W^{z'\vee p'}_{[p',v]}$ if and only if  $b'_1\in W^{z_1 \vee p'_1}_{[p'_1,v_1]}$ and $b'_2\in W^{z_2 \vee p'_2}_{[p'_2,v_2]}$.
\end{itemize}
Hence, $DS(z_1,z_1')=DS(z_1',z_1)$ and $DS(z_2,z_2')=DS(z_2',z_2)$ imply the assertion.
\end{proof}

\bigskip
{\bf Acknowledgments:} 
We would like to thank Grant T. Barkley, Francesco Brenti, Matthew Dyer, and Christian Gaetz for stimulating discussions we had during the Workshop \lq\lq Bruhat order: recent developments and open problems\rq\rq, held at the University of  Bologna from 15 April 2024 to 19 April 2024. The authors are members of the I.N.D.A.M. group GNSAGA.
 
\section*{Appendix: double hypercubes \\ By Grant T. Barkley and Christian Gaetz}
\newcommand{\cH}{\mathcal{H}}
\newcommand{\sH}{\mathscr{H}}

In this appendix, we will prove an analog of Conjecture~\ref{congetturaEM0} for \emph{co-elementary intervals}. We say $[u,v] \subset S_n$ is \emph{co-simple} if the set of roots corresponding to the lower cover relations of $v$ in $[u,v]$ is linearly independent and \emph{co-elementary} if $[u,v]$ is isomorphic as a poset to $[u',v']$, where $[u',v']$ is a co-simple interval in some symmetric group $S_m$ (see \cite{BG}, where the poset dual notions of \emph{simple} and \emph{elementary} intervals are used). Every interval of the form $[u,w_0]$ is co-simple. 

In this appendix, rather than directly examining double shortcuts, we will study an analog that we call \emph{double hypercubes}. Given an upper hypercube decomposition $z\in [u,v]$, we let
\begin{align*}
\sH^z_{[u,v]} &:= \{ (\cH, p) : p\in [z,v] \text{ and } \cH \text{ is a hypercube in $[u,v]$ containing $u$,} \\& \text{spanned by an antichain, and with }\cH\cap [z,v] = \{p\}    \}.    
\end{align*}
We conjecture that for amazing hypercube decompositions, the map sending $(\cH,p)\in \sH^z_{[u,v]}$ to $p$ is a bijection from $\sH^z_{[u,v]}$ to $W^z_{[u,v]}$. We will state our result in terms of $\sH^z_{[u,v]}$; together with the conjectured bijection, this would also imply cases of Conjecture~\ref{congetturaEM0}. 

\begin{defn}
    Let $z$ and $z'$ be upper amazing hypercube decompositions of $[u,v]$. 
    We denote by $DH(z,z')$ the multiset
    \[ \{ (|\cH_1| + |\cH_2|, b) : (\cH_1,p) \in \sH_{[u,v]}^z \text{ and } (\cH_2,b)\in \sH^{z'\vee p}_{[p,v]} \}, \]
    where $|\mathcal{H}|$ denotes the rank of the hypercube $\mathcal{H}$. An element of $DH(z,z')$ is called a \emph{double hypercube}.
\end{defn}

We now present Theorem~\ref{thm:coelementary-double-hc}, the main result of this appendix. This proves a direct analog of Conjecture~\ref{congetturaEM0}, and gives a new proof of combinatorial invariance of $R$- and $\tilde{R}$-polynomials for co-elementary intervals via the analog of Theorem~\ref{thm:conj-implies-conj}.

\begin{thm}
\label{thm:coelementary-double-hc}
    Let $[u,v]$ be a co-elementary interval. Let $z$ and $z'$ be upper amazing hypercube decompositions which are strong in the sense of \cite[Def. 3.4]{BG}. Then $DH(z,z')=DH(z',z)$ as multisets.
\end{thm}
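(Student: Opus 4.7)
The plan is to show that the multisets $DH(z,z')$ and $DH(z',z)$ have matching multiplicities at each element. Observe first that the rank coordinate $|\cH_1|+|\cH_2|$ in each pair simplifies to $\ell(b)-\ell(u)$, since each $|\cH_i|$ equals the length difference across $\cH_i$ (the hypercube $\cH_1$ goes from $u$ to $p$ and $\cH_2$ goes from $p$ to $b$). Hence the multiset $DH(z,z')$ is, up to this intrinsic tag, determined by the multiplicities $m_{z,z'}(b) := \lvert\{(\cH_1,p,\cH_2) : (\cH_1,p)\in \sH^z_{[u,v]},\ (\cH_2,b)\in \sH^{z'\vee p}_{[p,v]}\}\rvert$, and the theorem reduces to proving $m_{z,z'}(b) = m_{z',z}(b)$ for every $b\in[u,v]$.

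My approach is to construct a multiplicity-preserving bijection between the two indexing sets. Using the structural description of co-elementary intervals from \cite{BG} and the strong-HCD hypothesis, I would parametrize pairs $(\cH_1,p)$ with prescribed endpoint in $[z,v]$ by subsets of a canonical reflection family attached to $[u,v]$ (obtained, through the isomorphism in the definition of co-elementary, from the linearly independent covering reflections below $v$ in a co-simple representative), and similarly for the second-stage data inside $[p,v]$. The amazing property should guarantee that $z'\vee p$ and $z\vee p'$ interact compatibly with this parametrization, so that the second-stage indexing is governed by the same root backbone as the first.

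Then I would exhibit the swap: given the combined reflection data of $\cH_1$ and $\cH_2$ together with the endpoint $b$, recut the data instead with respect to the fragment landing in $[z',v]$ first, yielding a pair $(\cH_1',p'),(\cH_2',b)$ with the same combined data and hence the same contribution to $m_{z',z}(b)$. Symmetry of the construction, together with uniqueness of the recut provided by the strong hypothesis, would give $m_{z,z'}(b) = m_{z',z}(b)$. The bijection is essentially an involution up to the $z \leftrightarrow z'$ swap, so one direction of the argument suffices.

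The main obstacle is precisely this parametrization-and-recut step. In an arbitrary interval two stacked hypercubes do not carry enough structure to be split along a different HCD; the required rigidity seems to need both the linearly-independent-root condition coming from co-elementarity and the strong hypothesis on $z,z'$. Making \cite[Def.~3.4]{BG} do this work — specifically showing that $\cH\cap [z,v]$ and $\cH\cap [z',v]$ are coordinate sub-hypercubes of the ambient root-data hypercube with complementary directions — and then verifying that the reconstructed $(\cH_1',p')$ and $(\cH_2',b)$ meet the antichain-span and single-intersection conditions built into the definition of $\sH$, constitutes the technical heart of the proof.
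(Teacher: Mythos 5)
Your first observation is incorrect. The rank $|\cH_i|$ of a hypercube is the number of edges in its spanning antichain, and these are Bruhat-graph edges: a single edge labeled by a reflection can raise Bruhat length by $1,3,5,\dots$. So $\ell(p)-\ell(u)\ge |\cH_1|$ with equality only when every edge of $\cH_1$ is a cover, and in general $|\cH_1|+|\cH_2|\neq \ell(b)-\ell(u)$. That integer is precisely the degree of the monomial a double hypercube is meant to contribute to $\widetilde{R}_{u,v}$, so reducing to multiplicities $m_{z,z'}(b)$ discards exactly the data the theorem is asserting equality of; any bijection would have to carry the rank sum as genuine extra information.

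Beyond that, your route (parametrize hypercubes by reflection data, then ``recut'' the stacked pair along $z'$) is a direct-bijection strategy, which is not what the paper does, and you yourself flag the recut step as the unsolved core, so the proposal is a sketch rather than a proof. The paper avoids any explicit matching of double hypercubes. After reducing to co-simple intervals, it proves Lemma~\ref{lem:incpaths}: the length generating function of $\le$-increasing paths from $u$ to $p$ with $\operatorname{supp}(\Gamma)\cap[z,v]=\{p\}$ is independent of the reflection order $\le$, so long as reflections in $\mathcal{C}_{[u,v]}\setminus\mathcal{C}_{[z,v]}$ precede those in $\mathcal{C}_{[z,v]}$. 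It then uses \cite[Lemma 4.5 and Theorem 3.10]{BG} to produce reflection orders $\le_z$ and $\le_{z'}$ under which increasing paths from $u$ to $v$ biject with tuples $(\cH_1,p,\cH_2,b,\gamma)$, so that $DH(z,z')$ and $DH(z',z)$ both enumerate (with the rank-sum tag as path length) increasing paths from $u$ to $b$ meeting $[z\vee z',v]$ only at $b$, for the respective orders. Lemma~\ref{lem:incpaths} applied with the decomposition $z\vee z'$ then yields the equality without constructing a bijection. If your recut could be made precise it would prove a strictly finer statement; but the uniqueness and well-definedness it requires are exactly what the reflection-order machinery of \cite{BG} (property (E) and the path-to-hypercube bijection of Theorem~3.10 there) is supplying in the actual proof, and your proposal leaves that machinery, and hence the whole argument, unbuilt.
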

\begin{proof}

We first note that the claim reduces immediately to the case of co-simple intervals, since everything in sight is preserved by poset isomorphism. 

\newcommand{\cC}{\mathcal{C}}
\newcommand{\tR}{\widetilde{R}}
We refer to \cite[Chapter 5]{BB} or \cite{BG} for the notions of reflections labeling Bruhat graph edges, reflection orders, and increasing paths in the Bruhat graph with respect to a reflection order. We write $\cC_{[x,y]}$ for the set of reflections labeling the coatoms of an interval $[x,y]$. We first prove the following lemma.

\begin{lem}\label{lem:incpaths}
    Let $[u,v]$ be co-simple and $z\in [u,v]$  be so that $[z,v]$ is diamond complete in $[u,v]$. Let $\leq_1$ and $\leq_2$ be reflection orders 
    so that if $t \in \cC_{[u,v]}\setminus \cC_{[z,v]}$ and $t'\in \cC_{[z,v]}$, then $t\leq_1 t'$ and $t\leq_2 t'$.
    Then for any $p\in [z,v]$ and $k\in \mathbb{N}$, the number of $\leq_i$-increasing length-$k$ paths $\Gamma$ from $u$ to $p$ such that $\operatorname{supp}(\Gamma)\cap [z,v] = \{p\}$ is independent of $i\in \{1,2\}$.
\end{lem}
\begin{proof}[Proof of Lemma~\ref{lem:incpaths}]
    For $p\in [z,v]$ write $a_i^{p,k}$ for the number of length-$k$ paths $\Gamma$ from $u$ to $p$ such that $\operatorname{supp}(\Gamma)\cap [z,v] = \{p\}$ which are increasing with respect to $\leq_i$. We induct on the size $|[u,p]|$. If $p=z$, then $a_i^{p,k}$ is the coefficient of $q^k$ in $\tR_{u,p}$ (see Theorem~\ref{Dyertilde}), which is independent of $i$. For arbitrary $p\in [z,v]$, we have the identity
    \[ \sum_{\substack{x\in [z,p] \\ k \in \mathbb{N} }} a_i^{x,k}q^k \tR_{x,p} = \tR_{u,p}. \tag{$\ast$}
    \label{eqn:aident} \]
    The left-hand side counts pairs consisting of  $\leq_i$-increasing paths from $u$ to $x$ which intersect $[z,p]$ at $\{x\}$ and $\leq_i$-increasing paths from $x$ to $p$. The right-hand side counts $\leq_i$ increasing paths from $u$ to $p$. The identity (\ref{eqn:aident}) then follows by the hypothesis on $\leq_i$, which implies that the concatenation of the two paths in a pair counted by the left-hand side is an increasing path (cf. the proof of \cite[Corollary 4.6]{BG}). 

    Now inductively we have that
    \[ \sum_{\substack{x\in [z,p] \\ x\neq p \\ k \in \mathbb{N} }} a_1^{x,k}q^k \tR_{x,p} = \sum_{\substack{x\in [z,p] \\ x\neq p \\ k \in \mathbb{N} }} a_2^{x,k}q^k \tR_{x,p}. \]
    Evidently (\ref{eqn:aident}) implies
    \[ \sum_{\substack{x\in [z,p] \\ k \in \mathbb{N} }} a_1^{x,k}q^k \tR_{x,p} = \sum_{\substack{x\in [z,p] \\ k \in \mathbb{N} }} a_2^{x,k}q^k \tR_{x,p}. \]
    Subtracting the two equations gives
    \[ \sum_{k\in \mathbb{N}} a_1^{p,k}q^k = \sum_{k\in \mathbb{N}} a_2^{p,k}q^k, \]
    from which the claim follows.

 \end{proof}

We now pick two amazing hypercube decompositions $z,z'\in [u,v]$. The remainder of the proof is simply an application of results from \cite{BG} to construct two reflection orders so that their increasing paths correspond with elements of $DH(z,z')$ and $DH(z',z)$, respectively. We then apply Lemma~\ref{lem:incpaths} to deduce the equality $DH(z,z')=DH(z',z)$. Note that \cite{BG} uses lower hypercube decompositions rather than upper, so the statements there are dual to the ones being used below.

By \cite[Lemma 4.5]{BG} and the proof of \cite[Corollary 4.6]{BG}, there is a reflection order $\leq_{z}$ with the property that any edge in $[z,v]\setminus[z',v]$ precedes any edge in $[z,v]\cap [z',v]$, and any edge in $[u,v]\setminus [z,v]$ precedes any edge in $[z,v]$. There is also an analogous reflection order $\leq_{z'}$. Hence $\leq_z$ satisfies property (E) from \cite{BG} both for the hypercube decomposition $z \in [u,v]$ and for the hypercube decomposition $z\vee z' \in [z,v]$. We deduce from two applications of \cite[Theorem 3.10]{BG} that the $\leq_z$-increasing paths $\Gamma$ from $u$ to $v$ are in bijection with tuples $(\cH_1,p,\cH_2,b,\gamma)$, where $(\cH_1,p)$ is in $\mathscr{H}_{[u,v]}^z$ and $(\cH_2,b)$ is in $\mathscr{H}_{[p,v]}^{z\vee z'}$ and $\gamma$ is a $\leq_z$-increasing path from $b$ to $v$. Furthermore the length of $\Gamma$ is $|\cH_1|+|\cH_2|+|\gamma|$, and from the proof of \cite[Theorem 3.10]{BG}, the path $\gamma$ is a subpath of $\Gamma$. Hence we deduce that the multiset
\[ \{(|\Gamma|,b) : \text{$\Gamma$ is $\leq_z$-increasing from $u$ to $b$},~\mathrm{supp}(\Gamma)\cap [z\vee z',v] = \{b\} \} \]
coincides with the multiset $DH(z,z')$. Similarly, the multiset
\[ \{(|\Gamma|,b) : \text{$\Gamma$ is $\leq_{z'}$-increasing from $u$ to $b$},~\mathrm{supp}(\Gamma)\cap [z\vee z',v] = \{b\} \} \]
coincides with the multiset $DH(z',z)$. By Lemma \ref{lem:incpaths}, we deduce $DH(z,z')=DH(z',z)$.
    
\end{proof}

\end{document}